\documentclass{amsart}
\usepackage{amsmath,amssymb,amsthm,amsfonts,amscd}
\usepackage{microtype}
\usepackage{mathscinet}
\usepackage[pdftex]{color}
\usepackage[bookmarks=true,hyperindex,pdftex,colorlinks, citecolor=MidnightBlue,linkcolor=MidnightBlue, urlcolor=MidnightBlue]{hyperref}
\usepackage[dvipsnames]{xcolor}

\newtheorem{theorem}{Theorem}
\newtheorem{proposition}[theorem]{Proposition}

\theoremstyle{definition}

\theoremstyle{remark}

\numberwithin{equation}{section}

\newcommand{\norm}[1]{\left\Vert#1\right\Vert}
\newcommand{\clco}{\mathop{\overline{\mathrm{conv}}}\nolimits}

\newcommand{\Free}{{\mathcal F}}
\newcommand{\Lip}{{\mathrm{Lip}}_0}

\newcommand{\Real}{\mathbb{R}}
\newcommand{\set}[1]{\left\{#1\right\}}

\newcommand\N{\mathbb{ N}}
\newcommand\Pe{\mathbb{ P}}

\begin{document}

\title[Pe\l czy\'nski space is isomorphic to the free space over a compact]{Pe\l czy\'nski space is isomorphic to the Lipschitz free space over a compact set}


\author[L. C. Garc\'ia-Lirola]{Luis C. Garc\'ia-Lirola}
\address{Department of Mathematical Sciences, Kent State University, Kent OH 44242, USA}
\email{lgarcial@kent.edu}
\thanks{The first author was partially supported by the grants MINECO/FEDER MTM2017-83262-C2-2-P, Fundaci\'on S\'eneca CARM
	19368/PI/14 and by a postdoctoral grant in the framework of \emph{Programa
		Regional de Talento Investigador y su Empleabilidad} from \emph{Fundaci\'on S\'eneca - Agencia de Ciencia y Tecnolog\'ia de la Regi\'on de Murcia}.}

\author[A. Proch\'azka]{Anton\'in Proch\'azka}
\address{Universit\'e Bourgogne Franche-Comt\'e, Laboratoire de Math\'ematiques UMR 6623, 16 route de Gray,
	25030 Besan\c con Cedex, France}
\email{antonin.prochazka@univ-fcomte.fr}
\thanks{The second author was supported by the French ``Investissements d'Avenir'' program, project ISITE-BFC (contract ANR-15-IDEX-03).}

\subjclass[2010]{Primary 46B03; Secondary 46B28}

\keywords{Lipschitz free space, Pe\l czy\'nski space}

\date{August, 2018}

\commby{}

\begin{abstract}
	We prove the result stated in the title.
	This provides a first example of an infinite-dimensional Banach space whose Lipschitz free space is isomorphic to the free space of a compact set.
\end{abstract}

\maketitle


Let us first motivate the result stated in the title. 
For a metric space $M$ with a distinguished point $0 \in M$ (commonly called a \emph{pointed metric space}), the \emph{Lipschitz free space} $\Free(M)$ is the norm-closed linear span of the evaluation functionals, i.e. $\set{\delta(x): x \in M}$, in the space $\Lip(M)^*$. 
Here the Banach space $\Lip(M)=\set{f \in \Real^M: f \mbox{ Lipschitz}, f(0)=0}$ is equipped with the norm \[\displaystyle\norm{f}_L:=\sup\set{\frac{f(x)-f(y)}{d(x,y)}:x \neq y}.\]
One of the main features of the free spaces is that every Lipschitz map $f:M \to N$ which fixes the zero induces a linear map $\hat{f}:\Free(M)\to \Free(N)$ such that $\delta_N \circ f=\hat{f}\circ \delta_M$ and $\norm{\hat{f}}=\norm{f}_L$.
For a quick proof and some other basic properties we refer the reader to the paper \cite{CDW16}. 

The operation of constructing a Lipschitz free space over a given metric space is thus a functor from the category of pointed metric spaces (whose morphisms are the Lipschitz maps that fix zero) into the category of Banach spaces whose morphisms are the bounded linear maps. 
This functor is not injective though. 
Some information about the metric space is lost on the way to its free space.
Indeed, among the simplest examples we can mention the metric spaces $[0,1]$ and $\Real$ whose free spaces are (isometrically) isomorphic since $\Free([0,1])=L^1[0,1]$ and $\Free(\Real)=L^1(\Real)$. 
More generally, Kaufmann \cite{Ka15} has proved that for every Banach space $X$, the space $\Free(X)$ and $\Free(B_X)$ are isomorphic. 
It can also happen that $\Free(X)$ is isomorphic to $\Free(Y)$ when $X$ and $Y$ are Lipschitz non-isomorphic separable Banach spaces. 
This is the case for example for $X=c_0$ and $Y=C[0,1]$, as was shown by Dutrieux and Ferenczi~\cite{DutFer}.
On the other hand, if $X$ has the bounded approximation property (BAP for short, see the definition below) while $Y$ does not, then $\Free(X)$ and $\Free(Y)$ cannot be isomorphic. 
Indeed, a celebrated result of Godefroy and Kalton~\cite{GK03} says that $X$ enjoys the BAP if and only if $\Free(X)$ does. 

An intriguing open problem is whether $\dim X \neq \dim Y$ implies that $\Free(X)$ is not isomorphic to $\Free(Y)$. 
The only partial solution is due to Naor and Schechtman~\cite{NaorSchechtman} who have shown that $\Free(\Real^2)$ does not embed into $\Free(\Real)$.
A fortiori, $\Free(\Real^2) \not\simeq \Free(\Real)$.
But otherwise the fog is so dense in these parts that it is not even clear whether $\dim X=\infty$ and $\dim Y<\infty$ implies $\Free(X) \not\simeq \Free(Y)$. 
Looking again at the above mentioned result of Kaufmann, this problem would be settled if one could prove that $\Free(X)$ is not isomorphic to the free space over any compact metric space when $\dim X=\infty$. 
That is certainly the case for any non-separable Banach space $X$ (since the density characters of $X$ and $\Free(X)$ are equal for any infinite $X$) but it turns out to be false in general. 
Indeed, it is enough to combine our result with the well known fact that $\Pe$ and $\Free(\Pe)$ are isomorphic (see~\cite[Remark on page 139]{GK03}).

\begin{theorem}\label{t:main}
	There exists a compact convex subset $K$ of the Pe\l czy\'nski space $\Pe$ such that $\Pe$ is isomorphic to $\Free(K)$. 
\end{theorem}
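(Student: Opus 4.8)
\emph{Strategy.} We have $\Pe\simeq\Free(\Pe)$, and $\Pe$ is, up to isomorphism, the unique Banach space with a finite-dimensional decomposition (FDD) that is complementably universal for the class of Banach spaces with an FDD; in particular $\Pe\simeq\Pe\oplus\Pe\simeq(\sum_n\Pe)_{\ell_1}$. Hence, by the Pe\l czy\'nski decomposition method, it suffices to produce a compact convex set $K\subseteq\Pe$ with the property that $\Free(K)$ is isomorphic to a complemented subspace of $\Pe$ and, conversely, $\Pe$ is isomorphic to a complemented subspace of $\Free(K)$.

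\emph{Construction of $K$.} Let $(G_n)$ be a sequence of finite-dimensional normed spaces which, with infinitely many repetitions, is dense in the Banach--Mazur distance in the class of all finite-dimensional normed spaces. As $(\sum_n G_n)_{\ell_1}$ has an FDD it is isomorphic to a complemented subspace of $\Pe$; fix such a copy, place the scaled ball $\lambda_n B_{G_n}$ in the $n$-th $\ell_1$-summand, where $\lambda_n\downarrow0$ is chosen to decrease quickly, and set
\[
K:=\clco\Bigl(\{\basepoint\}\cup\bigcup_{n}\lambda_n B_{G_n}\Bigr)\subseteq\Pe .
\]
Since the cells shrink to the origin with uniformly small tails, $K$ is norm-compact; it is convex and contained in $\Pe$ by construction. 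The block structure immediately yields two facts: each $\lambda_n B_{G_n}$ is an isometric subset of $K$, and the coordinate projection onto the $n$-th block restricts to a $1$-Lipschitz retraction of $K$ onto $\lambda_n B_{G_n}$.

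\emph{The two inclusions.} By functoriality, the coordinate retractions make each $\Free(\lambda_n B_{G_n})$, which is isometric to $\Free(B_{G_n})$, norm-one complemented in $\Free(K)$; since $\Free(B_{G_n})\simeq\Free(G_n)$ by Kaufmann's theorem (with a universal constant), $\Free(K)$ contains uniformly complemented copies of the free space of every finite-dimensional normed space. To promote this to a complemented copy of all of $\Pe$ one must bring in the points of $K$ that are genuine convex combinations of points of the cells, and organize the finite-dimensional pieces along the FDD of $\Pe$---this is where the convexity of $K$ is essential. For the opposite inclusion one arranges the placement of the cells so that $\Free(K)$ embeds complementedly into $\Free(\Pe)\simeq\Pe$, e.g.\ by making $K$ a Lipschitz retract of $\Pe$. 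Granting both, and using $\Pe\simeq(\sum_n\Pe)_{\ell_1}$, the Pe\l czy\'nski decomposition method yields $\Free(K)\simeq\Pe$.

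\emph{The main difficulty.} The crux is the inclusion ``$\Pe$ is complemented in $\Free(K)$''. It is not enough that $\Free(K)$ contain uniformly complemented copies of all finite-dimensional free spaces---so does $(\sum_n\Free(G_n))_{\ell_1}$, which is not isomorphic to $\Pe$---so one genuinely has to understand how the closed-convex-hull operation interacts with the free-space functor and extract a full copy of $\Pe$ from it. The tension is that the compactness of $K$ forces $\lambda_n\to0$, which pulls against the uniform estimates one would like to have; fabricating a single $K$ that is at once compact, convex, a Lipschitz retract of $\Pe$, and rich enough in this delicate sense is the heart of the proof.
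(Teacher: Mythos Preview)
Your overall scaffolding is right---Pe\l czy\'nski decomposition, with $\Pe$ complemented in $\Free(K)$ and $\Free(K)$ complemented in $\Pe$---but neither inclusion is actually established in your proposal, and the routes you sketch for them are, respectively, the wrong one and an open problem.

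For the direction ``$\Free(K)$ complemented in $\Pe$'' you propose to make $K$ a Lipschitz retract of $\Pe$. The paper explicitly records that this is unknown even for its own (much simpler) $K$; it is an instance of an open question of Godefroy and Ozawa. The paper bypasses retractions entirely: one chooses $K$ so that the Schauder basis projections $P_n$ of $\Pe$ satisfy $P_n(K)\subset K$. Then the induced maps $\widehat{P_n}$ on $\Free(K)$ land in $\Free(P_n(K))$, and since $\overline{P_n(K)}$ is a finite-dimensional compact convex set, $\Free(P_n(K))$ has the $1$-BAP by Perneck\'a--Smith. This yields the BAP for $\Free(K)$, and the universal property of $\Pe$ then gives the complementation for free. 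Your $K$, sitting inside a complemented copy of $(\sum G_n)_{\ell_1}$ rather than being tied to a basis of $\Pe$ itself, is not obviously invariant under any approximating sequence of finite-rank operators on $\Pe$.

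For the direction ``$\Pe$ complemented in $\Free(K)$'' you correctly diagnose that piling up uniformly complemented copies of $\Free(B_{G_n})$ is insufficient, but you never supply the missing idea. The paper's mechanism has nothing to do with assembling finite-dimensional pieces: it invokes the lemma of Dutrieux--Lancien / Godefroy--Ozawa that if $K$ is a closed convex subset of a Banach space $X$ with $\overline{\operatorname{span}}(K)=X$, then $X$ is isomorphic to a complemented subspace of $\Free(K)$. With this in hand the construction becomes almost trivial: take a normalized Schauder basis $(e_n)$ of $\Pe$ and set $K=\clco\{e_n/n:n\in\mathbb N\}$. This $K$ is compact, convex, generates $\Pe$, and satisfies $P_n(K)\subset K$, so both inclusions follow immediately from the two cited results. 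Your elaborate cell construction is unnecessary and, as it stands, does not close either gap.
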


Recall that, up to isomorphism, $\Pe$ is the unique separable  Banach space with a Schauder basis and such that every separable Banach space with the BAP is isomorphic to a complemented subspace of $\Pe$. 
We refer the reader to \cite{AlbiacKalton} for the construction of this space which shows that every Banach space with a basis is complemented in $\Pe$. 
The stronger property that every Banach space with the BAP is complemented in $\Pe$ is proved in \cite{Kadec,Pel2}.

A Banach space $X$ is said to have the \emph{bounded approximation property} if there exists $\lambda>0$ such that for every $x_1,\ldots,x_n \in X$ and every $\varepsilon>0$ there exists a finite rank linear operator $T:X\to X$ which satisfies $\norm{T}\leq \lambda$ and $\norm{x_i-Tx_i}\leq \varepsilon$ for all $1\leq i\leq n$.
If the above is satisfied with a given $\lambda>0$, we say that $X$ has the $\lambda$-BAP.

In the proof of Theorem~\ref{t:main} we will construct a compact convex $K\subset \Pe$ in such a way that $\Free(K)$ has the BAP. 
In fact, $K$ will satisfy the hypothesis of the following criterion which is an easy corollary of a much deeper result of Perneck\'a and Smith~\cite{PS15}.

\begin{proposition}\label{compactBAP} Let $X$ be a Banach space and $K\subset X$ be a 
	closed
	convex subset containing $0$. 
	Assume that there exist $\lambda\geq 1$ and a sequence $(T_n)_n$ of finite-rank operators on $X$ such that $\norm{T_n}\leq \lambda$ and $T_n(K)\subset K$ for each $n$, and $(T_n)_n$ converges pointwise to the identity on $K$. 
	Then $\Free(K)$ has the $\lambda$-BAP. 
\end{proposition}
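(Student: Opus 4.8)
The plan is to deduce this from the Perneck\'a--Smith criterion, which (in the form we need) states that if a Banach space $Z$ admits a sequence of finite-rank operators converging pointwise to the identity on some generating subset, then one controls the BAP of the free space; concretely, the relevant consequence of~\cite{PS15} is that if $M$ is a pointed metric space and $(S_n)_n$ is a sequence of Lipschitz maps $M \to M$ fixing $0$, with $\sup_n \norm{S_n}_L \le \lambda$, each $S_n(M)$ contained in a finite-dimensional (or finitely-generated) subset, and $S_n \to \mathrm{id}_M$ pointwise, then $\Free(M)$ has the $\lambda$-BAP. So the strategy is: take $M = K$ with base point $\basepoint = 0$, and set $S_n := T_n\restriction_K$. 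I first need to check that these restrictions are legitimate self-maps of $K$, which is exactly the hypothesis $T_n(K) \subset K$, and that they fix the base point, which follows since $T_n$ is linear so $T_n(0) = 0$.

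Next I would verify the three quantitative requirements. The Lipschitz constant of $S_n = T_n\restriction_K$ is at most the operator norm $\norm{T_n} \le \lambda$, since for $x,y \in K$ we have $\norm{T_n x - T_n y} \le \norm{T_n}\,\norm{x-y}$; hence $\sup_n \norm{S_n}_L \le \lambda$. Pointwise convergence $S_n \to \mathrm{id}_K$ on $K$ is immediate from the hypothesis that $(T_n)_n$ converges pointwise to the identity on $K$. Finally, each $S_n(K) = T_n(K)$ is a bounded subset of the finite-dimensional space $T_n(X)$ (finite rank!), hence is relatively compact; in particular it sits inside a compact, and a fortiori finite-dimensional, hence separable and "small" subset of $K$ — this is the finiteness/compactness condition that makes the Perneck\'a--Smith machinery produce finite-rank approximating operators on $\Free(K)$ rather than merely bounded ones.

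With these verifications in hand, the cited result of~\cite{PS15} applies to $M = K$ and yields that $\Free(K)$ has the $\lambda$-BAP, completing the proof. The only real point requiring care — and the step I expect to be the main obstacle — is matching the precise hypotheses of~\cite{PS15} to the situation at hand: one must be sure that "finite rank on $X$" translates into whatever finiteness condition Perneck\'a and Smith impose on the images $S_n(K)$ (they typically ask that $S_n(M)$ be contained in a finitely-generated, or uniformly discrete-plus-compact, piece), and that the constant $\lambda$ transfers without loss. Once the translation of hypotheses is pinned down, the argument is a routine restriction-and-bookkeeping exercise; everything else (the Lipschitz bounds, fixing the base point, pointwise convergence) is formal.
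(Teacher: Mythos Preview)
Your proposal has a genuine gap: the ``Perneck\'a--Smith criterion'' you invoke does not exist in the form you state. What \cite{PS15} actually proves (and what the paper uses) is the specific fact that the Lipschitz free space over a compact convex subset of a finite-dimensional normed space has the $1$-BAP. There is no theorem in \cite{PS15} of the shape ``Lipschitz self-maps $S_n$ of $M$ with finite-dimensional images, uniformly $\lambda$-Lipschitz and converging pointwise to the identity, imply that $\Free(M)$ has the $\lambda$-BAP.''

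The reason your outline cannot be closed by bookkeeping alone is concrete. The linearization $\widehat{S_n}\colon \Free(K)\to\Free(K)$ of $S_n=T_n\restriction_K$ does have norm at most $\lambda$ and does converge strongly to the identity on $\Free(K)$, but it is \emph{not} of finite rank: its range is $\Free(T_n(K))$, an infinite-dimensional Banach space whenever $T_n(K)$ is an infinite set (as it will be for any nondegenerate convex body in $T_n(X)$). So the operators $\widehat{S_n}$ by themselves witness nothing about the BAP. The missing idea---precisely the content of the paper's proof---is a two-step composition: use $\widehat{T_n}$ to land in $\Free(T_n(K))$, then invoke \cite{PS15} on the finite-dimensional compact convex set $\overline{T_n(K)}$ to obtain a genuinely finite-rank operator $S$ on $\Free(T_n(K))$ with $\norm{S}\le 1$ that approximates the finitely many points in question, and set $T=S\widehat{T_n}$. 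This composition is exactly the step you flagged as ``the main obstacle'' and then left undone; it is where \cite{PS15} is actually applied, and without it the argument does not produce any finite-rank operators on $\Free(K)$.
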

The proof is a slight modification of a part of \cite[Theorem 5.3]{GK03}.
\begin{proof} 
	By a density argument, it suffices to show that if $\varepsilon>0$ and $x_1,\ldots x_k\in K$ then there exists a finite rank linear map $T:\Free(K)\to\Free(K)$ such that $\norm{T}\leq\lambda$ and $\norm{T\delta(x_i)-\delta(x_i)}<\varepsilon$ for $1\leq i\leq k$. 
	Fix $n$ such that $\norm{T_n(x_i)-x_i}<\varepsilon/2$ for $1\leq i\leq k$. 
	Let $\widehat{T_n}\colon \Free(K)\to\Free(T_n(K))$ be the induced linear map. 
	Since 
	$\overline{T_n(K)}$ 
	is a finite-dimensional compact convex set, 
	$\Free(T_n(K))=\Free(\overline{T_n(K)})$ 
	has the 1-BAP~\cite{PS15}. 
	Thus we can find a finite-rank operator $S\colon\Free(T_n(K))\to\Free(T_n(K))$ so that $\norm{S}= 1$ and $\norm{S\widehat{T}_n\delta(x_i)-\widehat{T}_n(\delta(x_i))}<\varepsilon/2$ for $1\leq 1\leq k$. 
	Then $T=S\widehat{T}_n$ does the work. 
\end{proof}

\begin{proof}[Proof of Theorem~\ref{t:main}] Let $\{e_n : n\in\N\}$ be a normalized Schauder basis of $\Pe$ with the associated projections $(P_n)_n$. 
	Consider $K=\clco\{e_n / n: n\in \mathbb N\}$, which clearly is a compact convex set satisfying $\Pe = \overline{\operatorname{span}}(K)$. 
	By \cite[Lemma~2.1]{DL08} or \cite[Theorem~4]{GO14} 
	we get that $\Pe$ 
	is isomorphic to a complemented subspace of $\Free(K)$. 
	Notice that $P_n(K)\subset K$ for each $n$. 
	Thus, the space $\Free(K)$ has the BAP by Proposition~\ref{compactBAP}. 
	Now, the universal property of $\Pe$ yields that $\Free(K)$ is isomorphic to a complemented subspace of $\Pe$. 
	Since $\Pe$ is isomorphic to its $\ell_1$-sum (see~\cite[proof of Theorem~13.3.1]{AlbiacKalton}),  the conclusion follows by applying the standard Pe\l czy\'nski's decomposition method.
\end{proof}

With our appetite whetted by this positive result we ask: is it true that for every separable infinite dimensional Banach space $X$ there exists a compact metric space $K$ such that $\Free(X) \simeq \Free(K)$?
Can such $K$ always be found inside $X$?
Let us remark that $\Free(X)$ is isomorphic to a complemented subspace of $\Free(K)$ whenever $K$ is a generating convex subset of $\Free(X)$.
Indeed, it is the contents of \cite[Lemma 2.1]{DL08} or \cite[Theorem~4]{GO14}.

Notice that the proof of Theorem~\ref{t:main} could also be finished by proving that $K$ is a Lipschitz retract of $\Pe$ (instead of proving that $\Free(K)$ has the BAP).
Nevertheless we don't know if this is the case. 
More generally, the question of whether every separable Banach space admits a compact convex generating Lipschitz retract was raised in~\cite[Question 3]{GO14}.
Possible consequences of the affirmative answer to this question are discussed in~\cite[Problem 6.3]{GodefroySurvey}.

\end{document}